\documentclass[reqno]{amsart}
\usepackage{amssymb}
\usepackage{amsthm}
\usepackage{bbm}
\usepackage{graphicx}

\usepackage[colorlinks=true]{hyperref}
\usepackage[all]{xypic}

\usepackage[final]{epsfig}
\usepackage{psfrag}
\usepackage{epstopdf}

\newtheorem{thm}{Theorem}[section]
\newtheorem{theorem}[thm]{Theorem}

\newtheorem{corollary}[thm]{Corollary}
\newtheorem{lemma}[thm]{Lemma}

\newtheorem{prop}[thm]{Proposition}
\theoremstyle{definition}

\theoremstyle{observation}

\theoremstyle{definition}

\newcommand{\M}{\mathcal{M}}

\newcommand{\defin}[1]{{\it #1}}
\newcommand{\R}{\mathbb{R}}
\newcommand{\N}{\mathbb{N}}
\newcommand{\Q}{\mathbb{Q}}

{\bf}{\it}
{\bf}{\it}
\newtheorem*{riemann-mapping-theorem}{Riemann Mapping  Theorem}{\bf}{\it}
{\bf}{\it}
{\bf}{\it}
{\bf}{\it}
{\bf}{\it}
{\bf}{\it}
{\bf}{\it}
{\bf}{\it}
{\bf}{\it}
{\bf}{\it}

\newenvironment{pf*}[1]{\proof[#1]}{\endproof}
\usepackage{euscript}

\newcommand{\cal}[1]{{\mathcal #1}}

\newcommand{\beq}{\begin{equation}}
\newcommand{\eeq}{\end{equation}}

\newcommand{\om}{\omega}

\newtheorem{defn}{Definition}[section]

\newcommand{\dist}{\operatorname{dist}}

\renewcommand{\mod}{\operatorname{mod}}

\newcommand{\wtl}{\widetilde}
\newcommand{\eps}{\epsilon}




\renewcommand{\Im}{\operatorname{Im}}

\numberwithin{equation}{section}

\newcommand{\propref}[1]{Proposition~\ref{#1}}

\newcommand{\cM}{{\mathcal M}}

\renewcommand{\cD}{{\cal D}}

\newcommand{\CC}{{\mathbb C}}
\newcommand{\RR}{{\mathbb R}}

\newcommand{\TT}{{\mathbb T}}
\newcommand{\ZZ}{{\mathbb Z}}
\newcommand{\NN}{{\mathbb N}}

\newcommand{\QQ}{{\mathbb Q}}

\newcommand{\ignore}[1]{{}}

    \title[Computational Complexity of real Julia sets]{Real quadratic Julia sets can have arbitrarily high complexity}
\author{Cristobal Rojas}
\address{Departamento de Matem\'aticas, Universidad Andres Bello, Chile.}
\email{crojas@mat-unab.cl} 

\author{Michael Yampolsky}
\address{Department of Mathematics, University of Toronto, Canada.}
\email{yampol@math.toronto.edu}
 \thanks{C.R was partially supported by projects FONDECYT  Regular 1190493 and Basal PFB-03 CMM-Universidad de Chile. M.Y. was partially supported by NSERC Discovery grant.}
\keywords{Complexity lower bounds, unimodal maps, renormalization.}
\subjclass[2010]{68Q17 and 37E05.} 
\begin{document}
\maketitle

\begin{abstract}  We show that there exist real parameters  $c\in (-2,0)$ for which the Julia set $J_{c}$ of the quadratic map $z^{2} + c$  has arbitrarily high computational complexity. More precisely, we show that for any given complexity threshold $T(n)$, there exist a real parameter $c$ such that the computational complexity of computing $J_{c}$ with $n$ bits of precision is higher than $T(n)$.  This is the first known class of real parameters with a non poly-time computable Julia set. 
\end{abstract}


\section{Introduction}

While the computability theory of polynomial Julia sets appears rather complete, the study of computational complexity of computable Julia sets offers many unanswered questions. Let us briefly overview the known results. In all of them the Julia set of a rational function $R$ is computed by a Turing Machine $M^\phi$ with an oracle for the coefficients of $R$. The first complexity result in this direction is independently due to Braverman \cite{thesis} and Rettinger \cite{Ret}, who showed that hyperbolic Julia sets have poly-time complexity. 


We note that the poly-time algorithm described in their results has been known to practitioners as {\it Milnor's Distance Estimator} \cite{Milnor-89}. Specializing to the quadratic family $f_c$, we note that Distance Estimator becomes very slow (exp-time) for the values of $c$ for which $f_c$ has a parabolic periodic point. This would appear to be a natural class of examples to look for a lower complexity bound. However, surprisingly, Braverman \cite{braverman-parabolic} proved that parabolic Julia sets are also polynomial-time computable. 
The algorithm presented in \cite{braverman-parabolic} is again explicit, and easy to implement in practice -- it is a refinement of Distance Estimator.

On the other hand, Binder, Braverman, and Yampolsky \cite{BBY2} proved that within the class of Siegel quadratics (the only case containing non computable Julia sets), there exists computable Julia sets whose time complexity can be arbitrarily high. 

A major open question is the complexity of quadratic Julia sets with Cremer points. They are notoriously hard to draw in practice; no high-resolution pictures have been produced to this day -- and although we know they are always computable, we do not know whether any of them are computably hard. 

Let us further specialize to real quadratic family $f_c$, $c\in\RR$. In this case, it was recently proved by Dudko and Yampolsky \cite{DudYam2} that almost every real quadratic Julia set is poly-time computable.  Conjecturally, the main technical result of \cite{DudYam2} should imply the same statement for complex parameters $c$ as well, but the conjecture in question (Collet-Eckmann parameters form a set of full measure among non-hyperbolic parameters) while long-established, is stronger than Density of Hyperbolicity Conjecture, which is the main open problem in the field.

It is also worth mentioning in this regard that the extreme non-hyperbolic examples in real dynamics are infinitely renormalizable quadratic polynomials. The archetypic such example is the celebrated Feigenbaum polynomial. In a different paper, Dudko and Yampolsky \cite{DudYam1} showed that the Feigenbaum Julia set also has  polynomial time complexity. 

The above theorems raise a natural question whether {\it all} real quadratic Julia sets are poly-time (the examples of \cite{BBY2} cannot have real values of $c$). In the present paper we answer this question in the negative by showing that

\begin{thm}There exists real parameters $c\in (-1.75,0)$ whose quadratic Julia sets have arbitrarily high computational complexity.  
\end{thm}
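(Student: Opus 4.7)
The plan is to build on the real renormalization theory and mimic, in the real setting, the diagonal construction of Binder--Braverman--Yampolsky. Concretely, I would construct $c$ as an infinitely renormalizable real parameter whose renormalization combinatorics (i.e.\ the sequence of periods $p_{1}<p_{2}<\cdots$ of the successive renormalizations) grow fast enough to defeat any prescribed complexity bound $T(n)$, while keeping $c$ itself computable.

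The first step is geometric. Using real tuning in the Mandelbrot set, given any sequence of admissible combinatorial types $\tau_{k}$ (say, real centers of primitive hyperbolic components of period $p_{k}$), one obtains a nested sequence of real Mandelbrot copies and hence a real $c\in(-2,0)$ whose $k$-th renormalization $R^{k}f_{c}$ is hybrid equivalent to a real quadratic of combinatorial type $\tau_{k}$. Inside $J_{c}$, this produces small Julia sets $J_{c}^{(k)}$ (copies at depth $k$) whose combinatorics are entirely determined by $\tau_{k+1},\tau_{k+2},\ldots$. The key geometric input, provided by the hyperbolicity of real renormalization, is that these small Julia sets sit at a definite geometric scale relative to their parent copy, so that $J_{c}^{(k)}$ has diameter bounded below by $\lambda^{-k}$ for a universal $\lambda>1$ independent of the chosen combinatorics.

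The second step is the lower bound. Fix a Turing machine $M^{\phi}$ that purports to compute $J_{c}$, with running time $T(n)$. I would argue that if one truncates the combinatorial sequence after level $k$, the resulting two candidate parameters $c,c'$ produce small Julia sets $J_{c}^{(k)},J_{c'}^{(k)}$ which disagree on a definite fraction of their diameter, but whose oracles for $c,c'$ are indistinguishable using less than $\Omega(p_{k+1})$ bits. Thus any correct algorithm must, at precision $n\approx\log(\lambda^{k})$, distinguish which combinatorial type $\tau_{k+1}$ occurred, and this requires time at least roughly $p_{k+1}$. By choosing $p_{k+1}$ to grow faster than $T$ composed with the scale $\lambda^{k}$, we force $M^\phi$ to be wrong at some precision.

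The third step is the diagonalization. Enumerate all Turing machines $M_{1},M_{2},\ldots$ and construct the sequence $\{p_{k}\}$ inductively, at stage $k$ picking $p_{k}$ so large that $M_{k}$ fails to compute $J_{c}$ correctly at precision $\lfloor k\log\lambda\rfloor$, regardless of how the later $p_{j}$ are chosen. Computability of $c$ is preserved because at each stage only finitely many combinatorial choices are committed, and the real tuning map is continuous and computable, so $c$ is the computable limit of a computable sequence of dyadic rationals.

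The main obstacle I expect is the second step: converting ``many possible combinatorics'' into ``large running time''. In BBY \cite{BBY2} the analogous bound comes from the fine geometry of Siegel disk boundaries and is rather delicate; here I would need a real-analytic analogue, roughly a statement that any algorithm which draws $J_{c}$ correctly up to radius $\lambda^{-k}$ must in particular locate the small Julia set $J_{c}^{(k)}$, and that locating it requires either (i) iterating $f_{c}$ a number of times proportional to $p_{1}\cdots p_{k+1}$, or (ii) querying the oracle for $c$ to a precision that already encodes $\tau_{k+1}$. Establishing this dichotomy rigorously, using the precise geometry of real renormalization limits together with a counting/indistinguishability argument on the oracle side, is where the bulk of the technical work will lie.
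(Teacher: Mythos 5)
Your proposal follows the BBY-style diagonalization on the oracle side, and that part is sound in spirit: a machine running in time $T(l)$ can read at most $T(l)$ bits of $c$, so it answers identically for any two parameters that agree to precision $2^{-T(l)}$, and one wins by exhibiting two such parameters whose Julia sets disagree by a full pixel at a scale $l$ fixed \emph{before} the machine runs. But the paper does not use renormalization at all for the geometric half of this argument, and this is exactly where your proposal has a genuine gap. What is needed is a mechanism that produces a \emph{definite-size} change in $J_c$ at a pre-committed scale from an \emph{arbitrarily small} change of the parameter. The paper gets this from parabolic implosion: at a primitive parabolic root $r$ (starting with $r=-1.75$), there are two sequences of primitive roots $r^1_k, r^2_k \searrow r$ whose implosion phases converge to two different angles $\theta^1\neq\theta^2$, and the corresponding Lavaurs sets $L_{\theta^1}\neq L_{\theta^2}$ differ so that some fixed pixel meets $J_{f_{c_1}}$ but misses $K_{f_{c_2}}$ (Theorem~\ref{t:discontinuity} and Corollary~\ref{cor:disc}); semicontinuity of $J$ and $K$ (Lemma~\ref{semi-continuity}) then preserves the fooled pixels in the limit parameter. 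Your substitute --- that changing the level-$(k+1)$ renormalization combinatorics produces small Julia sets ``which disagree on a definite fraction of their diameter'' while the parameters agree to $2^{-\Omega(p_{k+1})}$ --- is precisely the unproven step, and it is in tension with the facts: to make the two parameters that close you must take two deep sub-copies accumulating on the same point of the level-$k$ copy, and by continuity of straightening their straightened parameters are also close, so a definite disagreement of the small Julia sets again requires a discontinuity (or quantitatively terrible modulus of continuity) of $a\mapsto J_a$, which in the real family only comes from the parabolic phenomenon you have not invoked. Moreover, the known results in this setting (almost every real $c$ is poly-time, the Feigenbaum Julia set is poly-time, a priori bounds give tame geometry of bounded-combinatorics renormalization) suggest that infinite renormalizability by itself does not create hardness.

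Two further specific problems. First, your claimed universal bound $\operatorname{diam} J_c^{(k)}\gtrsim \lambda^{-k}$ with $\lambda$ independent of the combinatorics is not available: the per-level scaling factor degenerates as the renormalization periods grow, so letting $p_{k+1}\to\infty$ to defeat $T$ interacts with the scales at which differences appear, exactly the circularity you need to avoid. Second, branch (i) of your dichotomy (``locating $J_c^{(k)}$ requires iterating $f_c$ about $p_1\cdots p_{k+1}$ times'') is not a legitimate complexity lower bound: an algorithm is not obliged to iterate $f_c$, and the only robust source of lower bounds in this oracle model is information-theoretic indistinguishability of parameters, i.e.\ branch (ii). The paper's construction uses only that mechanism, paired with the parabolic-implosion discontinuity, and needs no analysis of what the machine ``must'' compute internally.
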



\section{Preliminaries}

 \subsection*{Computational Complexity of sets}
 
We give a very brief summary of relevant notions of Computability Theory and Computable Analysis. For a more in-depth
introduction, the reader is referred to e.g. \cite{BY-book}.
As is standard in Computer Science, we formalize the notion of
an algorithm as a {\it Turing Machine} \cite{Tur}.  
Let us begin by giving the modern definition of the notion of computable real
number,  which goes back to the seminal paper of Turing \cite{Tur}. By identifying $\Q$ with $\N$ through some effective enumeration, we can assume algorithms can operate on $\Q$. Then a real number $x\in\RR$ is called \defin{computable} if there is an algorithm  $M$ which, upon input $n$, halts and outputs a rational number $q_n$ such that  $|q_n-x|<2^{-n}$.
Algebraic numbers or  the familiar constants such as $\pi$, $e$, or the Feigenbaum constant  are computable real numbers. However, the set of all computable real numbers $\RR_C$ is necessarily countable, as there are only countably many Turing Machines. 

Computability of compact subsets of $\RR^k$ is defined by following the same principle.  Let us say that a point in $\RR^k$ is a {\it dyadic rational with denominator} $2^{-n}$ if it is of the form $\bar v\cdot 2^{-n}$, where $\bar v\in\ZZ^k$ and $n\in\NN$. 
Recall that {\it Hausdorff distance} between two
compact sets $K_1$, $K_2$ is
$$\dist_H(K_1,K_2)=\inf_\eps\{K_1\subset K^\eps_2\text{ and }K_2\subset K^\eps_1\},$$
where $$K^\eps=\bigcup_{z\in K}B(z,\eps)$$ stands for the $\eps$-neighbourhood of a set $K$.

We will also define the one-sided {\it distance from $K_1$ to $K_2$} as
$$\dist(K_1,K_2)=\inf_\eps\{K_1\subset K^\eps_2\},$$
so that
$$\dist_H(K_1,K_2)=\max(\dist(K_1,K_2),\dist(K_2,K_1)).$$

\begin{defn}\label{1}We say that a compact set $K\Subset\RR^k$ is {\it computable} if there exists an algorithm $M$ with a single input $n\in\NN$, which outputs a
finite set $C_n $ of dyadic rational points in $\RR^k$ such that
$$\dist_H(C_n,K)<2^{-n}.$$
\end{defn}

An equivalent way of defining computability, which is more convenient for discussing computational complexity is the following.
  For $\bar x=(x_1,\ldots,x_k)\in\RR^k$ let
the norm $||\bar x||_1$ be given by
$$||\bar x||_1=\max|x_i|.$$
\begin{defn}
  \label{def-local}
A compact set $K\Subset\RR^k$ is computable if there exists an algorithm $M$ which, given as input $(\bar v, n)$ representing a dyadic rational point $x$ in $\R^k$ whose coordinates have $n$ dyadic digits, outputs $0$ if $x$ is at distance strictly more than $2\cdot 2^{-n}$ from $K$ in $||\cdot||_1$ norm,  outputs $1$ if $x$ is at distance strictly less than $2^{-n}$ from $K$, and outputs either $0$ or $1$ in the ``borderline'' case.
\end{defn}

In the familiar context of $k=2$, such an algorithm can be used to ``zoom into'' the set $K$ on a computer screen with $W\times H$ square pixels to draw an accurate picture of a rectangular portion of $K$ of width $W\cdot 2^{-n}$ and height $H\cdot 2^{-n}$. $M$ decides which pixels in this picture have to be black (if their centers are $2^{-n}$-close to $K$) or white (if their centers are $2\cdot 2^{-n}$-far from $K$), allowing for some ambiguity in the intermediate case.

Let $C=\dist_H(K,0)$. 
For an algorithm $M$ as in Definition~\ref{def-local} let us denote by $T_{M}(n)$ the supremum of running times of $M$ over all dyadic points of size $n$ which are inside the ball of radius $2C$ centered at the origin: this is the computational cost of using $M$ for deciding the hardest pixel at the given resolution.

\begin{defn}
We say that a function $T:\NN\to\NN$ is a {\it lower bound} on time complexity of $K$ if for any $M$ as in Definition~\ref{def-local} there exists an infinite sequence $\{n_i\}$
such that
$$T_M(n_i)\geq T(n_i). $$
Similarly, we say that $T(n)$ is an {\it upper bound} on time complexity of $K$ if there exists an algorithm $M$ as in Definition~\ref{def-local} such that for all $n\in\NN$
$$T_M(n)\leq T(n).$$
\end{defn}

%

In this paper, we will be interested in the time complexity of Julia sets of  quadratic maps of the form $z^2 + c$, with $c\in \RR$.
As is standard in computing practice, we will assume that the algorithm can read the value of $c$ externally to produce a zoomed in picture of the Julia set. More formally, let us denote $\cD_n\subset \RR$ the set of dyadic rational numbers with denominator $2^{-n}$. 
We say that a function $\phi:\NN\to\QQ$ is an {\it oracle} for $c\in \RR$ if for every $m\in \NN$
$$\phi(m)\in \cD_m\text{ and }d(\phi(m),c)<2^{-(m-1)}.$$
We amend our definitions of computability and complexity of a compact set $K$ by allowing {\it oracle Turing Machines} $M^\phi$ where
$\phi$ is any function as above. 
On each step of the algorithm, $M^\phi$ may read the value of $\phi(m)$ for an arbitrary $m\in\NN$.

This approach allows us to separate the questions of computability and computational complexity of a parameter $c$ from that of the Julia set. It is crucial to note that reading the values of $\phi$ comes with a computational cost:

\medskip
\noindent
{\it querying $\phi$ with precision $m$ counts as $m$ time units. In other words, it takes
  $m$ ticks of the clock to read the first $m$ dyadic digits of $c$.
}

\medskip
\noindent
This is again in a full agreement with computing practice: to produce a verifiable picture of a set, we have to use the ``long arithmetic'' for constants, which are represented by sequences of dyadic bits. The computational cost grows with the precision of the computation, and manipulating a single bit takes one unit of machine time.


\subsection*{Julia sets of quadratic polynomials and the statement of the main result}

For a quadratic polynomial $f_{c}: z \mapsto z^{2} + c$, the filled-in Julia set $K_{c}$ of $f_{c}$ is defined as the set of points that do not escape under iteration of $f_{c}$:
$$
K_{c} = \{ z \in \CC: (f_{c}^{n}(z))_{n} \text{ is bounded} \},
$$ 
where $f_{c}^{n}$ denotes the $n^{th}$ iteration $f_{c}\circ f_{c} \circ \dots \circ f_{c}$ of $f_{c}$.  The Julia $J_{c}$ set of $f_{c}$ is $J_{c}=\partial K_{c}$. 

Our main result is the following. 

\bigskip
\noindent\textbf{Main Theorem.} \emph{
 Given any function $T:\NN\to\NN$, there exists a value of $c \in (-1.75,0)$ such that the map $P_c$ has a Julia set $J_{c}$ whose computational  complexity is bounded below by $T(n)$.
	}
\bigskip


\section{Proof of the Main Theorem}

\subsection{Parabolic implosion} 

 A point $\alpha$ is \emph{parabolic} for a complex quadratic map $f_c$  if $$f_c^k(\alpha)=\alpha \qquad  \text{ and } \qquad  (f_c^k)'(\alpha)= \exp(2\pi i m/n)$$ for some $k> 0, n > 0$ and $m\geq 0 $ with $m$ and $n$ relatively prime. The simplest possible example is $f_{1/4}(z)=z^2 + 1/4$, for which we have $\alpha = 1/2, k=1, m=0$.  The point $1/4$ is the cusp of the Mandelbrot set, the filled Julia set $K(f_{1/4})$ is a \emph{cauliflower} centered at $\om= 0 $, whose boundary $J(f_{1/4})$ is a Jordan curve.

 A parameter value $c$ is called \emph{super stable} if $0$ is periodic under $f_c$.  To each super stable parameter $c$ there corresponds a homeomorphic small copy $\mathcal{M}(c)$ of the Mandelbrot set $\mathcal{M}$ which contains $c$ and called \emph{the Mandelbrot set tuned by} $c$.  The \emph{root} of $\mathcal{M}(c)$ is the point corresponding to $1/4$ in $\mathcal{M}$, and the \emph{center} is $c$.  The root $r(c)$ of each little copy $\mathcal{M}(c)$ is a parabolic parameter in the sense that the map $f_{r(c)}$ has a parabolic periodic point $\alpha$ of some period $p$. A copy is called {\it primitive} if $(f^p_{r(c)})'(\alpha)=1$; in this case, $r(c)$ is called a {\it primitive root}.

 A detailed discussion of the local dynamics near a parabolic point can be found in \cite{Mil}. Let us summarize some of the relevant facts below. Fix a primitive root $r(c)\in\RR$, and let $p$ be the period of the parabolic orbit of $f\equiv f_{r(c)}$. Denote $B\equiv B_{r(c)}$ the {\it parabolic basin} of $f$, which is the collection of all points $z\in \CC$ whose orbits are attracted to the parabolic orbit. Each of the connected components of $B$ is also cauliflower-shaped. The critical point $0$ lies in $B$; let us denote $B_0$ the connected component of $B$ which contains $0$. Let $\alpha$ be the necessarily unique point of the parabolic orbit which lies in the boundary of $B_0$.

 There exist two $\RR$-symmetric topological disks $P^A$, $P^R$ known as {\it attracting and repelling petals of $\alpha$} respectively, such that:
 \begin{enumerate}
   \item $P^A\cup P^R$ form a punctured neighborhood of $\alpha$;
 \item $P^A\subset B_0$ and $\alpha\ni\partial P^A$; the iterate $f^p$  univalently maps $P^A$ into $ P^A$;
 \item for each $z\in B$ there is an iterate $f^m(z)\in P^A$; all orbits of $f^p$ in $P^A$ converge to $\alpha$ uniformly on compact subsets;
 \item $\alpha\in\partial P^R$;
 \item the local inverse branch  $f^{-p}$ of $f^{p}$ which fixes $\alpha$ univalently extends to $P^R$ and maps it into $P^R$;
 \item all orbits of $f^{-p}$ in $P^R$ converge to $\alpha$ uniformly on compact subsets;
 \item every inverse orbit of $f$ which converges to $\alpha$ intersects $P^R$;
 \item the quotient Riemann surfaces  $C^A\equiv P^A/f^p$ and $C^R\equiv P^R/f^p$ are conformally isomorphic to the bi-infinite cylinder $\CC/\ZZ$.
   \end{enumerate}
 Consider an $\RR$-symmetric conformal isomorphism $\psi_A:C^A\mapsto \CC/\ZZ$. Its lift
 $\Psi_A:P^A\to \CC$ transforms $f^p$ into the unit translation
 $$\Psi_A(f^p(z))=\Psi_A(z)+1.$$
 We call it an {\it attracting Fatou coordinate}; by Liuoville's theorem, it is defined uniquely up to an additive constant. A {\it repelling Fatou coordinate} $\Psi_R$ is defined in a similar fashion for $P^R$.

 The Douady-Lavaurs {\it parabolic implosion} theory \cite{Douady-Lavaurs} describes, in particular, what happens with the (filled) Julia set of the map $f_{r(c)}$ under a small perturbation of the parameter $r(c)\mapsto r(c)+\eps$ for $\eps>0$.
We summarize the relevant facts about parabolic implosion as follows:
 \begin{theorem}\label{t:limits1} Let $r\in\RR$ be a root of a primitive small copy of $\cM$.  There is a continuous map $\tau(0, \eps_0] \to \TT = \CC / \ZZ$ called the phase map, with a lift $\wtl{\tau}:(0,\eps_0] \to \RR$ tending to $-\infty$ as $\eps \to 0$ and an injective map $\theta \to L_{\theta}$ from $\TT$ to the set of non-empty compact subsets of $K(f_r)$ so that the following holds:
       \begin{itemize}
       \item we have $J(f_r) \subsetneq L_{\theta} \subsetneq K(f_r)$;
       \item furthermore,
         $$\underset{\tau(\eps)\to \theta}{\overline{\lim}}K(f_{\eps})\subset L_\theta\text{ and }\underset{\tau(\eps)\to \theta}{\underline{\lim}}J(f_{\eps})\supset \partial L_\theta.$$
       \end{itemize}
       Moreover, for each $\theta$ as above, there exists an analytic map $g_\theta$ called the Douady-Lavaurs map, which is defined on the basin of attraction of the parabolic orbit of $f_r$ which has the following properties:
       \begin{itemize}
       \item in the Fatou coordinates of $f_r$, the map $g_\theta$ becomes a translation by $\theta$:
         $$\Psi_R \circ g_\theta\circ (\Psi_A)^{-1}(z)=z+\theta;$$
       \item for  all $\eps\in(0,\eps_0]$, there exist integers $k(\eps)$ so that
         $$f^{k(\eps)}_{\eps}\underset{\tau(\eps)\to \theta}{\longrightarrow} g_\theta$$
         uniformly on compact sets (so $g_\theta$ is a part of the {\it geometric limit} of the dynamics of $f_{\eps}$ as $\tau(\eps)\to\theta$);
       \item $g_\theta$ commutes with $f_r$;
         \item the set $L_\theta$ is the non-escaping set of the dynamics generated by the pair $<f_r,g_\theta>$: it consists of the points with bounded orbits.

         \end{itemize}
       
\end{theorem}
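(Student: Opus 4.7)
The statement assembles the classical Douady--Lavaurs parabolic implosion picture adapted to real quadratic dynamics; my plan is to set up the perturbed Fatou coordinates, extract geometric limits along sequences $\eps_i\to 0$ whose phases converge, and then read off the semicontinuity statements for $J$ and $K$ from the resulting $<f_r,g_\theta>$ dynamics.

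First I would pass to the first return map $f\equiv f^p_r$ near $\alpha$, so that $f'(\alpha)=1$, and work with the attracting and repelling Fatou coordinates $\Psi_A,\Psi_R$ supplied in the preliminaries. For $\eps>0$ small, the parabolic fixed point $\alpha$ splits into two nearby fixed points of $f^p_{r+\eps}$ (since the copy is primitive, with multiplier exactly $1$, so the perturbation is non-degenerate). Between these two fixed points one can construct, following Douady--Lavaurs, \emph{perturbed Fatou coordinates} $\Psi^\eps_A,\Psi^\eps_R$ whose domains cover the whole ``Écalle--Voronin gate'' through which orbits travel, and which converge to $\Psi_A,\Psi_R$ respectively on compact subsets of the original petals as $\eps\to 0$. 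The transition map $\Psi^\eps_R\circ(\Psi^\eps_A)^{-1}$ is an almost-translation, and its real part $\wtl\tau(\eps)$ is precisely the logarithm of the transit time $k(\eps)\asymp \pi/\sqrt{\eps}$; this produces the phase map $\tau(\eps):=\wtl\tau(\eps)\bmod 1$ and its lift diverging to $-\infty$.

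Next, given any $\theta\in\TT$ and any sequence $\eps_i\to 0$ with $\tau(\eps_i)\to\theta$, I would define the Douady--Lavaurs map by the formula $g_\theta:=\Psi_R^{-1}\circ(z\mapsto z+\theta)\circ\Psi_A$, extended to the full basin $B$ by pulling back along iterates of $f_r$. The commutation $g_\theta\circ f_r=f_r\circ g_\theta$ is immediate from the intertwining relations $\Psi_A\circ f^p=\Psi_A+1$ and $\Psi_R\circ f^p=\Psi_R+1$. The convergence
\[
f_{r+\eps_i}^{\,k(\eps_i)}\longrightarrow g_\theta
\]
uniformly on compact subsets of $B$ is the heart of the argument: one decomposes the orbit of a point $z\in B$ under $f_{r+\eps_i}$ into (i) a bounded number of iterates bringing it into the domain of $\Psi^{\eps_i}_A$, (ii) $k(\eps_i)$ iterates inside the gate, during which $\Psi^{\eps_i}_A$ conjugates the dynamics to $w\mapsto w+1$, and (iii) exit through $\Psi^{\eps_i}_R$. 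Convergence of the perturbed coordinates to the unperturbed ones then identifies the limit with $g_\theta$.

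Finally, the sets $L_\theta$ are \emph{defined} as the non-escape loci of the pair $<f_r,g_\theta>$; the inclusions $J(f_r)\subsetneq L_\theta\subsetneq K(f_r)$ follow because $L_\theta$ is forward-invariant under $f_r$ and because $g_\theta$ maps a portion of the basin outside $K(f_r)$ (giving the strict upper inclusion) while it fixes points on $J(f_r)$ (giving the strict lower one). Injectivity $\theta\mapsto L_\theta$ follows by recovering $\theta$ from the $g_\theta$-orbit of the critical value inside Fatou coordinates. The semicontinuity
\[
\varlimsup_{\tau(\eps)\to\theta} K(f_{r+\eps})\subset L_\theta,\qquad
\varliminf_{\tau(\eps)\to\theta} J(f_{r+\eps})\supset \partial L_\theta
\]
then drops out of the geometric-limit statement: any point escaping under $<f_r,g_\theta>$ escapes under a finite composition of $f_r$ and $f_{r+\eps}^{k(\eps)}$ for small enough $\eps$ (upper bound on $K$), and any repelling periodic point of $<f_r,g_\theta>$ on $\partial L_\theta$ persists as a repelling periodic point of $f_{r+\eps}$ and lies in $J(f_{r+\eps})$ (lower bound on $J$). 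The main technical obstacle is the uniform control of perturbed Fatou coordinates — in particular, verifying that their domains are large enough to cover a fixed compact subset of $B$ for all small $\eps$ — which is the content of Lavaurs' original theorem and which I would invoke rather than reprove.
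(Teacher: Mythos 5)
The paper does not actually prove this statement: Theorem~\ref{t:limits1} is presented as a summary of the classical Douady--Lavaurs parabolic implosion theory and is justified by citation to \cite{Douady-Lavaurs} (with \cite{epstein} used later for the density of repelling orbits of the pair $<f_r,g_\theta>$ in $\partial L_\theta$). Your proposal is therefore doing more than the paper does, and its skeleton is the standard one: perturbed Fatou coordinates on the gate, the phase as the translation part of the transition map $\Psi^\eps_R\circ(\Psi^\eps_A)^{-1}$, the Lavaurs map $g_\theta=\Psi_R^{-1}\circ(z\mapsto z+\theta)\circ\Psi_A$ extended over the basin, geometric-limit convergence $f_{r+\eps}^{k(\eps)}\to g_\theta$, and then the two semicontinuity inclusions read off from finite words in $<f_r,g_\theta>$ (escape for the $K$-bound, persistence of repelling periodic points --- together with their density in $\partial L_\theta$, which you should state explicitly --- for the $J$-bound). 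Citing Lavaurs' theorem for the uniform control of the perturbed coordinates, as you do, is exactly the level of rigor the paper itself adopts.

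Two local statements in your sketch are wrong as written and should be repaired. First, the lifted phase $\wtl{\tau}(\eps)$ is not ``the logarithm of the transit time'': up to sign, bounded corrections and the choice of normalization, it is the fractional shift of the transit time itself, $\wtl{\tau}(\eps)\sim-\pi/\sqrt{\eps}$, which is what makes it tend to $-\infty$; with a logarithmic phase the relation between $\tau(\eps)\to\theta$ and the choice of $k(\eps)$ in the convergence statement would not come out correctly. Second, $g_\theta$ is defined only on the parabolic basin, so it cannot ``fix points on $J(f_r)$''; the inclusion $J(f_r)\subset L_\theta$ holds simply because points of $J(f_r)$ never enter the domain of $g_\theta$ and have bounded $f_r$-orbits, while strictness of $J(f_r)\subsetneq L_\theta$ requires exhibiting basin points whose full $<f_r,g_\theta>$-orbits stay bounded (e.g.\ preimages under $g_\theta$ of the parabolic orbit), and strictness of $L_\theta\subsetneq K(f_r)$ comes, as you say, from basin points that $g_\theta$ sends outside $K(f_r)$.
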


\begin{figure}[ht]
\centerline{\includegraphics[width=1.3\textwidth]{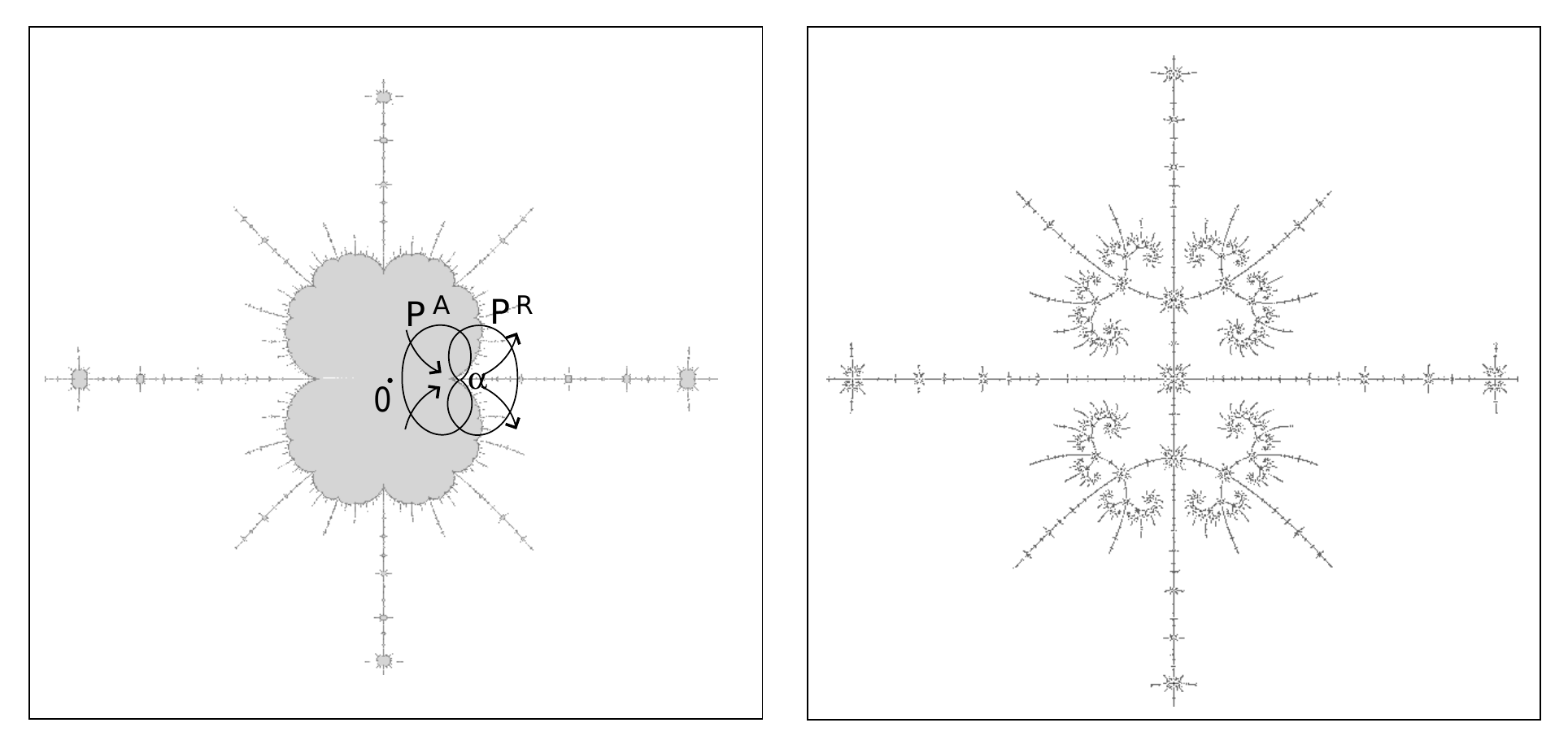}}
\caption{\label{fig:parab} Left: a zoom into the filled Julia set of $f_c$ for the parabolic parameter $c=-1.75$ near $0$. The parabolic point $\alpha$ has period $3$.
  The arrows indicate the action of $f^3_{-1.75}$ in the vicinity of $\alpha$. Right: an illustration of parabolic implosion; a zoom into the Julia set for $c=-1.75+\eps$ for a small value of $\eps>0$.}
\end{figure}

 We refer the reader to the discussion in \cite{Yam-bounds} for the following claim:
 \begin{prop}
   \label{limit-theta}Let $r\neq 1/4$ be a primitive root parameter. 
   There exists an infinite sequence of angles $\{\theta_j\}$ such that:
   \begin{itemize}
   \item $\theta_j\to\theta_\infty$ for some $\theta_\infty\in\RR/\ZZ$;
    \item for each $i\leq \infty$ there is a sequence of primitive roots $r_k^i\in\RR$ with
   $$r_k^i\underset{i\to\infty}{\searrow}r\text{, and }\tau(r_k^i-r)\to\theta_i.$$
\end{itemize}
 \end{prop}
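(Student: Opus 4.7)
The proof plan combines the Douady--Lavaurs implosion theory recalled in \thmref{t:limits1} with the classical density of primitive small Mandelbrot copies accumulating on any parabolic parameter. The strategy has two stages: first produce the angle sequence $\{\theta_j\}$, then produce the associated real primitive roots.

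For the angle sequence, I would pick $\theta$'s at which the enriched dynamics $\langle f_r, g_\theta\rangle$ carries a primitive Mandelbrot-like renormalization with a parabolic cycle at an inner root. The Lavaurs map $g_\theta$ varies continuously in $\theta$ and commutes with $f_r$; as $\theta$ sweeps $\RR/\ZZ$, the post-critical behaviour of this pair traces out Mandelbrot-like strata (hyperbolic windows bounded by internal parabolic bifurcations). Angles corresponding to roots of primitive components of these strata are discrete with countably many accumulation points. One extracts an infinite sequence $\theta_j\to \theta_\infty$ where $\theta_\infty$ is itself of this type.

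For each $\theta_i$, the geometric-limit statement of \thmref{t:limits1} is the main tool: if $\eps_k\searrow 0$ along the real slice with $\tau(\eps_k)\to\theta_i$, then $f^{k(\eps_k)}_{r+\eps_k}\to g_{\theta_i}$ uniformly on compacta. By the choice of $\theta_i$, the composite $f_r^{j}\circ g_{\theta_i}$ has a parabolic periodic point of unit multiplier for a suitable $j$; a Hurwitz-type argument on the real-analytic multiplier function along $\{r+\eps:\eps>0\}$ produces a subsequence of real $\eps_k$'s at which $f_{r+\eps_k}$ itself has a periodic point of multiplier one, i.e., $r^i_k:=r+\eps_k$ is a real primitive root of a small copy of $\cM$. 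Continuity of $\tau$ guarantees $\tau(r^i_k-r)\to\theta_i$ along a further subsequence, and a diagonal extraction arranges the simultaneous convergence $r^i_k\searrow r$.

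The main obstacle, and the reason the statement is nontrivial, is the requirement that the primitive roots be real. Primitive Mandelbrot copies accumulate on $r$ in the complex plane in abundance, but pinning down a sequence inside $\RR$ with the prescribed phase depends on the $\RR$-symmetry of the entire construction: the petals $P^A,P^R$, the Fatou coordinates $\Psi_A,\Psi_R$, the phase map $\tau$ and hence $g_\theta$ for $\theta\in\RR/\ZZ$ are all preserved by complex conjugation. Under this symmetry the multiplier function restricted to the real slice is real-analytic, and an intermediate-value argument produces real zeros of $(\text{multiplier}-1)$ accumulating on $r$ with the correct phase. The bookkeeping for this real-symmetric argument, together with the explicit combinatorial identification of the angles $\theta_j$, is carried out in \cite{Yam-bounds}, to which we defer for full details.
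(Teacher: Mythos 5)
Your overall strategy -- select angles $\theta$ for which the enriched dynamics $\langle f_r,g_\theta\rangle$ has a primitive parabolic (multiplier $1$) quadratic-like piece, and then perturb along the real slice, using $\RR$-symmetry and a sign-change argument on the multiplier, to find real primitive roots with the prescribed phase -- is the same circle of ideas the paper uses and, like the paper, ultimately defers the analytic details to \cite{Yam-bounds}. But the way you generate the \emph{sequence} of angles is structurally different and contains the one genuine gap. You need infinitely many angles of this special type, accumulating at an angle $\theta_\infty$ which is again of this type, and you justify this only by the assertion that ``angles corresponding to roots of primitive components of these strata are discrete with countably many accumulation points.'' Nothing in \thmref{t:limits1} or in the cited implosion theory gives you, for free, an infinite family of such parabolic-type angles for $\langle f_r,g_\theta\rangle$, let alone accumulation at a point of the same type; this is a strictly stronger input than what the paper requires, and it is left unproved. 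The reason you are forced into it is that you try to produce, for each fixed $\theta_i$, primitive roots with phases tending to $\theta_i$ \emph{while the parameters accumulate at $r$ itself}; for an arbitrary angle this is not accessible by the perturbation argument, so you must make every $\theta_i$ special.

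The paper's outline avoids this entirely with a bootstrapping trick: it chooses a \emph{single} angle $\theta_\infty$ such that $\langle f_r,g_{\theta_\infty}\rangle$ has a quadratic-like restriction with a multiplier-$1$ parabolic point. The implosion argument at $r$ then yields primitive roots $r_i\searrow r$ with $\tau(r_i-r)=\theta_i\to\theta_\infty$, and these phases $\theta_i$ -- which are not chosen in advance and carry no special property -- \emph{are} the angle sequence of \propref{limit-theta}. The sequences $r^i_k$ for finite $i$ are then obtained by repeating the same construction at each primitive root $r_i$ in place of $r$: the resulting roots accumulate at $r_i$, and continuity of the phase map $\tau$ at $r_i-r>0$ gives $\tau(r^i_k-r)\to\tau(r_i-r)=\theta_i$. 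So only one special angle is ever needed. Two smaller points: Hurwitz's theorem does not apply on a real one-dimensional slice, so the realness of the roots must come from the saddle-node/intermediate-value mechanism you mention at the end (make that the argument, not an afterthought); and a parameter with a multiplier-$1$ cycle must still be identified as the root of a \emph{primitive small copy}, which uses tuning theory rather than being immediate. If you replace your ``infinitely many special angles'' step by the paper's nesting argument, the rest of your plan goes through at the same level of rigor as the paper's own outline.
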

 \begin{proof}[Outline of the proof]
   Consider any $\theta=\theta_\infty$ such that the two-generator dynamical system $<f_r,g_\theta>$ has a quadratic-like restriction with a parabolic fixed point with multiplier equal to $1$ (see Figure 4 in \cite{Yam-bounds} for an illustration). Then, there is a sequence of primitive small copies $\cM_i$ whose roots $r_i\to r$ and
   $$\tau(r_i-r)=\theta_i\to \theta.$$
   Repeating this constuction for each of the primitive roots $r_i$ instead of $r$, we obtain the desired roots $r^i_k$.
 \end{proof}
 
 We will make use of the following consequence of the parabolic implosion picture:
 
 \begin{theorem}\label{t:discontinuity}Let $r\neq 1/4$ be a primitive root parameter. Let $i=1,2$. Then there exist two strictly decreasing sequences $(r^i_{k})$ of primitive root parameters, two values
   $\theta^i\in[0,1)$, and two closed sets $L^{i}_{r}$  such that:
  \begin{itemize}
\item[i)] $r^i_k\searrow r$;
\item[ii)] in the notation of  Theorem~\ref{t:limits1}, we have  $\tau(r^i_k-r)\longrightarrow \theta^i$ and $L^i_r\equiv L_{\theta^i}$;
\item[iii)] $L^{1}_r\neq L^{2}_r$, and moreover, $\partial L^{1}_r\cap \CC\setminus L^2_r\neq\emptyset$.
\end{itemize}
\end{theorem}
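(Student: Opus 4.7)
The plan is to deduce \thmref{t:discontinuity} from \propref{limit-theta} by a careful choice of two distinct limit angles, and then to upgrade the injectivity of the correspondence $\theta\mapsto L_\theta$ to the sharper topological statement in (iii) using the dynamical structure supplied by \thmref{t:limits1}.

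First, I would apply \propref{limit-theta} to $r$, obtaining the infinite sequence $\{\theta_j\}\to\theta_\infty$ in $\TT$ together with the double-indexed family of primitive roots $r^i_k\searrow r$ with $\tau(r^i_k-r)\to\theta_i$. Since $\{\theta_j\}$ converges to $\theta_\infty$ and is infinite, it contains infinitely many distinct values, so I can pick any two of them as $\theta^1\neq\theta^2$ and relabel the corresponding sequences as $(r^i_k)_k$, $i=1,2$. Set $L^i_r:=L_{\theta^i}$. Properties (i) and (ii) follow immediately from \propref{limit-theta} and \thmref{t:limits1}, and the first half of (iii) -- namely $L^1_r\neq L^2_r$ -- is the injectivity clause of \thmref{t:limits1}.

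The substance of the proof is therefore the statement $\partial L^1_r\cap(\CC\setminus L^2_r)\neq\emptyset$. The natural strategy is to exploit the dynamical description of $L_\theta$ as the non-escaping set of the pair $\langle f_r, g_\theta\rangle$ supplied by \thmref{t:limits1}, with $\partial L_\theta$ being the corresponding Julia set. Since $g_{\theta^1}$ and $g_{\theta^2}$ become the Fatou-coordinate translations $z\mapsto z+\theta^1$ and $z\mapsto z+\theta^2$, the two enriched systems have genuinely different orbit structures. Concretely, I would locate a repelling periodic orbit of $\langle f_r, g_{\theta^1}\rangle$ lying in $\partial L^1_r$ whose orbit under the competing system $\langle f_r, g_{\theta^2}\rangle$ escapes to infinity; such an orbit then witnesses $\partial L^1_r\not\subset L^2_r$. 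To produce it, I would use the special feature at $\theta_\infty$ that is already exploited in the proof of \propref{limit-theta}: the system $\langle f_r, g_{\theta_\infty}\rangle$ admits a quadratic-like restriction with a parabolic fixed point of multiplier $1$, which gives $\partial L_{\theta_\infty}$ a distinguished periodic decoration whose Fatou-coordinate location is explicit. Replacing $\theta_\infty$ by a nearby $\theta_j$ shifts this decoration by $\theta_\infty-\theta_j$ in the repelling Fatou coordinate, throwing its periodic orbits out of the non-escaping set; after possibly swapping $\theta^1\leftrightarrow\theta^2$ the inclusion fails in the asserted direction.

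The main obstacle is in this last step: promoting the mere distinctness of $L^1_r$ and $L^2_r$ to the sharper statement that a boundary point of one actually escapes under the other. Topologically, one cannot conclude from $L^1_r\neq L^2_r$ alone that $\partial L^1_r\not\subset L^2_r$, since in principle $L^2_r\subsetneq L^1_r$ with $\partial L^1_r\subset L^2_r$ is compatible with injectivity of $\theta\mapsto L_\theta$. Quantitative control of how $L_\theta$ deforms with $\theta$, going beyond its set-theoretic injectivity, is therefore essential; the Fatou-coordinate displacement of the distinguished quadratic-like decoration at $\theta_\infty$ provides precisely this mechanism, and pinning down its effect rigorously is where the real work of the proof lies.
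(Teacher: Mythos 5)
Your setup is fine as far as it goes: items (i), (ii) and the weak half of (iii) do follow from Proposition~\ref{limit-theta} together with the injectivity of $\theta\mapsto L_\theta$ in Theorem~\ref{t:limits1}, and you correctly diagnose that the entire content of the theorem is the strong claim $\partial L^1_r\cap(\CC\setminus L^2_r)\neq\emptyset$, which cannot be extracted from $L^1_r\neq L^2_r$ by soft topology. But that is exactly the step your proposal does not prove. What you offer is a heuristic -- a ``distinguished periodic decoration'' of $\partial L_{\theta_\infty}$ whose orbit should escape after the phase is shifted from $\theta_\infty$ to a nearby $\theta_j$ -- and you concede in your final paragraph that making this rigorous ``is where the real work of the proof lies.'' That concession is accurate: nothing in your sketch rules out that the shifted points simply land back inside $L_{\theta^2}$ (which contains $J(f_r)$ and much more), nor does ``possibly swapping $\theta^1\leftrightarrow\theta^2$'' help, since, as your own annulus-type remark shows, distinctness of two compacta is compatible with each boundary being contained in the other set. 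Also note that your initial move of taking $\theta^1,\theta^2$ to be \emph{any} two distinct angles from the sequence is not compatible with the mechanism you then invoke, which requires one of them to be $\theta_\infty$ and the other to be a sufficiently close $\theta_j$.

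For comparison, the paper's proof supplies precisely the quantitative mechanism you say is missing, and it is not the one you sketch. One sets $L^2_r=L_{\theta_\infty}$ and works in the Fatou cylinders: the boundary $\Gamma$ of a component of the immediate parabolic basin of $f_r$ projects to a simple closed curve $\gamma$ in the repelling cylinder $C^R$, homotopic to the equator, and its pullback $\delta=T^{-1}(\gamma)$ under the translation $T(z)=z+\theta_\infty$ lies in $\Psi^A(\partial L_{\theta_\infty})$. The key geometric input is that cusps (copies of the parabolic cusp, dense in $\Gamma$ by conformal self-similarity) are dense in $\delta$, so $\delta$ is \emph{not} a horizontal circle; hence some horizontal circle $S\subset C^A$ meets both $\delta$ and the escaping region $\Psi^A(\CC\setminus L_{\theta_\infty})$, and at a suitable $z_0\in S\cap\delta$ every small leftward translate $z_0-\alpha$, $0<\alpha<\alpha_0$, escapes. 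Choosing $\theta_j\in(\theta_\infty,\theta_\infty+\alpha_0/2)$ and setting $L^1_r=L_{\theta_j}$, the corresponding boundary curve is exactly $\delta$ shifted by $\theta_j-\theta_\infty$, so it carries a point of $\partial L^1_r$ into $\CC\setminus L^2_r$. Your proposal never identifies this non-circularity (cusp-density) argument, which is what converts the phase shift into an actual escape; without it, the central claim of the theorem remains unproved.
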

 \begin{proof}
   Let $\theta_j\to\theta_\infty$ be as in \propref{limit-theta}. We will set
	$L^2_r\equiv L_{\theta_\infty}.$ To fix the ideas, let us assume that there is a decreasing subsequence
	$\theta_j\searrow \theta_\infty$ as in \propref{limit-theta} (the proof proceeds in a similar fashion 
	in the complementary case). 
	
	Let us denote $C^R$, $C^A$ the repelling and attracting Fatou cylinders of $f_r$ respectively, and
	let $$T:C^A\to C^R\text{ be given by }T(z)\equiv z+\theta_\infty\mod\ZZ.$$
	Denote $\Gamma$ the boundary of a component of the immediate parabolic basin of $f_r$. By the Maximum Principle, the 
	projection 
	$$\gamma=\Psi^R(\Gamma\cap P^R)\mod\ZZ\subset C^R$$
	is a simple closed curve on the repelling Fatou cylinder, homotopic to its equator $\RR/\ZZ$; set 
	$$\delta=T^{-1}(\gamma)\subset C^A.$$
	Standard facts of the parabolic implosion theory imply that 
	$$\delta\subset\Psi^A(\partial  L_{\theta_\infty}).$$
	Note that $\Gamma$ contains a cusp at the parabolic point. Conformal self-similarity considerations imply that cusps are dense in $\Gamma$, and hence also in $\delta$. In particular,  $\delta$ is not a circle. Thus, there exists a horizontal circle 
	$$S=\{\Im(z)=a>0\}\mod\ZZ\subset C^A$$
	which intersects $\delta$ and the escaping set $\Psi^A(\CC\setminus L_{\theta_\infty})$. Evidently, there is a 
	point $z_0$ on $S\cap\delta$ and $\alpha_0>0$ such that for any $\alpha\in(0,\alpha_0)$, 
	$$z_0-\alpha\mod\ZZ\in \Psi^A(\CC\setminus L_{\theta_\infty}).$$
	Let $\theta_j\in (\theta_\infty,\theta_\infty+\alpha_0/2)$. Setting
	$L^1_r=L_{\theta_j}$ completes the proof.

   \end{proof}
 
\begin{corollary}
  \label{cor:disc}
  Let the values $\theta^1$, $\theta^2$ be as above. For all  $n\in\NN$ sufficiently large, there exist $\delta>0$ and $z_0\in\CC$ such that the following holds. Let $\eps_i>0$ be such that
  $$|\tau(\eps_i)-\theta^i|<\delta.$$
  Set $c_i\equiv r+\eps_i$. Then
  $$J(f_{c_1})\cap \{|z-z_0|<2^{-n}\}\neq \emptyset\text{ and }K(f_{c_2})\cap \{|z-z_0|<2^{-(n-1)}\}= \emptyset.$$

\end{corollary}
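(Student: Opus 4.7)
The plan is to extract from property (iii) of Theorem~\ref{t:discontinuity} a point
$$z^*\in\partial L^1_r\cap\bigl(\CC\setminus L^2_r\bigr),$$
and then read off the corollary by combining the upper and lower Kuratowski-limit statements in Theorem~\ref{t:limits1}. Since $L^2_r$ is closed, there is $\rho_0>0$ with $B(z^*,3\rho_0)\cap L^2_r=\emptyset$. I take $z_0:=z^*$ and assume $n$ is large enough that $2^{-(n-1)}<\rho_0$; this is the only smallness condition on $n$, and it ensures that the ball $B(z_0,2^{-(n-1)})$ lies uniformly away from $L^2_r$.

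For the non-intersection statement, I would restate the upper-limit condition $\overline{\lim}_{\tau(\eps)\to\theta^2}K(f_\eps)\subset L^2_r$ in Kuratowski form: for every open neighborhood $U$ of $L^2_r$ there exists some $\delta_1>0$ such that $K(f_{c_2})\subset U$ whenever $|\tau(\eps_2)-\theta^2|<\delta_1$. Taking $U$ to be the open $\rho_0$-neighborhood of $L^2_r$, every point of $B(z_0,2^{-(n-1)})$ sits at distance at least $3\rho_0-2^{-(n-1)}>2\rho_0$ from $L^2_r$, so $K(f_{c_2})\cap B(z_0,2^{-(n-1)})=\emptyset$. For the non-empty intersection, the lower-limit condition $\underline{\lim}_{\tau(\eps)\to\theta^1}J(f_\eps)\supset\partial L^1_r$ together with $z^*\in\partial L^1_r$ gives, applied to $V=B(z_0,2^{-n})$, some $\delta_2>0$ with $J(f_{c_1})\cap V\neq\emptyset$ whenever $|\tau(\eps_1)-\theta^1|<\delta_2$. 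Setting $\delta:=\min(\delta_1,\delta_2)$ finishes the proof.

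Essentially the argument is just the translation of Theorem~\ref{t:limits1} into quantitative form; the substantive work has been done in Theorem~\ref{t:discontinuity}, which produces the separating point $z^*\in\partial L^1_r\setminus L^2_r$ that exposes the geometric discontinuity. The only subtle point is ensuring that the two thresholds $\delta_1$, $\delta_2$ -- coming from two different Kuratowski limits with different asymptotic regimes -- can be taken simultaneously, but this is immediate since we are free to take $\delta$ smaller than both.
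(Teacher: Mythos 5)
Your argument is correct, but it takes a genuinely different route from the paper. You extract a point $z^*\in\partial L^1_r\setminus L^2_r$ from Theorem~\ref{t:discontinuity}(iii) and then simply quantify the two semicontinuity bullets of Theorem~\ref{t:limits1} ($\overline{\lim}K(f_\eps)\subset L_{\theta^2}$ and $\underline{\lim}J(f_\eps)\supset\partial L_{\theta^1}$) into neighborhood statements; this works, though you should note that the passage from the Kuratowski limits to the uniform statements ``$K(f_{c_2})\subset U$ for all admissible $\eps_2$'' uses a subsequence/compactness argument together with the uniform boundedness of the sets $K_c$, and that (exactly as in the corollary's own statement and the paper's proof) the condition $|\tau(\eps_i)-\theta^i|<\delta$ must implicitly be read together with $\eps_i$ lying in the small-$\eps$ regime, since the lift of $\tau$ tends to $-\infty$ and the $\delta$-condition alone does not force $\eps_i$ to be small. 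The paper instead argues more constructively: it takes a disk $D\subset\CC\setminus L^2_r$ meeting $\partial L^1_r$, invokes the density of repelling periodic orbits of the enriched system $\langle f_r,g_{\theta^1}\rangle$ in $\partial L_{\theta^1}$ (citing \cite{epstein}), and uses the geometric-limit convergence $f_\eps^{k(\eps)}\to g_{\theta}$ plus the Schwarz Lemma to produce an actual repelling periodic point of $f_{c_1}$ in $D$, with a parallel geometric-limit argument placing $D$ in the escaping set of $f_{c_2}$. Your version is shorter and leans entirely on the stated limit properties; the paper's version avoids the uniformization step and exhibits concrete dynamical structure (a repelling cycle) inside the ball, which is more robust if one only trusts the convergence of the geometric limit rather than the full semicontinuity statements. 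Either way the substantive input is Theorem~\ref{t:discontinuity}, as you say.
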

\begin{proof}
  Let $D$ be a disk in $\CC\setminus L^2_r$ which intersects  $\partial L^{1}_r$. 
  By the structure theory of Douady-Lavaurs maps \cite{epstein}, repelling periodic orbits of $<f_{r},g_\theta>$ are dense in $\partial L_\theta$.
  Consider any repelling periodic point $\beta$ of $<f_r,g_{\theta^1}>\in D$.
  There is a sub-disk $W=D_t(\beta) \Subset D$ for some $t>0$ and a composition $F$ of iterates of $f_r$ and $g_{\theta^1}$ which is conformal in $W$ and such that $F^{-1}(W)\Subset W$. By Theorem~\ref{t:discontinuity}, provided $\delta$ is small enough, there is an iterate $f_{c_1}^n$ which is a sufficiently small perturbation of $F$ so that $f_{c_1}^n$ is conformal in $W$, and the inverse branch $(f_{c_1}^n|_W)^{-1}$ maps $W$ to $W'\Subset W$. The Schwarz Lemma implies that $f_{c_1}$ also has a repelling periodic point in $W$, and hence, $J(f_{c_1})\cap D\neq \emptyset$.
  
  On the other hand, shrinking $D$ if necessary, and using the same argument as above, we see that $D$ lies in the escaping set of $f_{c_2}$.
  \end{proof}

\subsection{Constructing Julia sets of prescribed complexity}

Let us begin by stating the standard lower semi-continuity property of $J_c$ and upper semi-continuity of $K_c$ (see \cite{Douady-Lavaurs}):
\begin{lemma}\label{semi-continuity}
  For any $\hat c\in\CC$ and any $\eps>0$ there exists $\delta>0$ such that
  $$\dist(J_{\hat c},J_c)<\eps\text{ and }\dist(K_c,K_{\hat c})<\eps$$
  for all $c$ such that $|c-\hat c|<\delta.$

\end{lemma}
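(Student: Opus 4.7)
\medskip
\noindent\textbf{Proof plan.} The two inequalities are respectively the classical upper semi-continuity of the filled Julia set $K_c$ and the lower semi-continuity of the Julia set $J_c$ in the parameter $c$. I would treat them as independent statements and then take the smaller of the two resulting values of $\delta$.

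For the bound $\dist(K_c,K_{\hat c})<\eps$, the goal is to show that every $z\notin K_{\hat c}^{\eps}$ escapes under $f_c$ once $c$ is sufficiently close to $\hat c$. Fix a radius $R>|\hat c|+3$ large enough that $|z|\ge R$ implies $|f_c(z)|\ge 2|z|$ for every $c$ with $|c-\hat c|\le 1$; then $\{|z|\ge R\}$ is a uniform escape region for all such parameters. The set $\Omega:=\overline{\CC\setminus K_{\hat c}^{\eps}}\cap \{|z|\le R\}$ is compact and disjoint from $K_{\hat c}$, so for each $z\in\Omega$ there is an integer $N(z)$ with $|f_{\hat c}^{N(z)}(z)|>R$. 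Joint continuity of $(z,c)\mapsto f_c^{N(z)}(z)$ makes this strict inequality persist on a product neighbourhood of $(z,\hat c)$; by compactness, finitely many such neighbourhoods cover $\Omega$, producing a single $\delta_1>0$ such that for every $|c-\hat c|<\delta_1$ and every $z\in\CC\setminus K_{\hat c}^{\eps}$, the orbit of $z$ under $f_c$ reaches $\{|z|\ge R\}$ and then escapes, whence $z\notin K_c$.

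For the bound $\dist(J_{\hat c},J_c)<\eps$, I would use the stability of repelling periodic orbits. Repelling periodic points are dense in $J_{\hat c}$, so I select a finite $(\eps/2)$-net $p_1,\ldots,p_k\subset J_{\hat c}$ of such points, of periods $n_i$ and with multipliers $|(f_{\hat c}^{n_i})'(p_i)|>1$. Each $p_i$ is a simple root at $c=\hat c$ of the holomorphic function $(z,c)\mapsto f_c^{n_i}(z)-z$, so the holomorphic implicit function theorem yields a unique holomorphic continuation $c\mapsto p_i(c)$ on a neighbourhood of $\hat c$, with $p_i(c)$ still repelling and hence in $J_c$. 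Choosing $\delta_2>0$ small enough that $|p_i(c)-p_i|<\eps/2$ simultaneously for all $i$ whenever $|c-\hat c|<\delta_2$ yields $J_{\hat c}\subset J_c^{\eps}$ by the triangle inequality. Taking $\delta:=\min(\delta_1,\delta_2)$ finishes the proof.

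Both steps are soft and entirely classical; there is no real obstacle here. The only care required is the uniform choice of escape radius $R$ and the compactness-plus-continuity argument in the first part, and the verification that the multiplier condition is an open condition in the second part, both of which are routine.
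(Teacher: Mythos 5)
Your proof is correct: the uniform escape-radius plus compactness argument yields $K_c\subset K_{\hat c}^{\eps}$, and the holomorphic continuation of a finite net of repelling periodic points (dense in $J_{\hat c}$) yields $J_{\hat c}\subset J_c^{\eps}$, which is exactly what the paper's one-sided $\dist$ convention requires. The paper gives no proof of this lemma at all—it is stated as a standard fact with a citation to \cite{Douady-Lavaurs}—and your argument is precisely the classical one underlying that citation, so there is nothing further to reconcile.
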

  
Let $T(l)$ be any increasing function.  Let $(M_n)_n$ be the list of all machines with an oracle for $c$ whose running time is less than $T(l)$.  That is, when provided with a dyadic point $p$ of size $l$ as input, the machine $M^{\phi}_n(p)$ halts in less than $T(l)$ steps and outputs 0 or 1. Note that during the computation, the machine can query the oracle $\phi$ to learn, at most, $T(l)$ bits of $c$. 

Our construction can be thought of as a game between a \emph{Player} and infinitely many \emph{opponents}, which will correspond to the machines $M_n$.  The opponents try to compute $J_c$ by asking the Player to provide an oracle $\phi$ for $c$, while the Player tries to chose the bits of $c$ in such a way that none of the opponents correctly computes $J_c$. We show that the Player always has a wining strategy: it plays against each machine, one by one, asking the machine to decide a particular pixel $p$ of a certain size. The machine then runs for a while asking the Player to provide more and more bits of $c$, until it eventually halts and outputs 0 or 1. Then the Player reveals the next bit of $c$ and shows that the machine's answer is incompatible with $J_c$. The details are as follows.

We will proceed inductively. At step $n$ of the induction, we will have a parabolic parameter $c_n$, a natural number $l_n$ and a dyadic point $p_{n}$ of size $l_{n}$  such that:
\begin{enumerate}
\item\label{machines} One the following two possibilities holds:
\begin{itemize}\label{fooling}
\item  $M_{n}^{\phi}(p_{n}) = 0 $ whereas   $\dist(p_{n}, J_{c_{n}})<2^{-l_{n}}/2$,
\item  $M_{n}^{\phi}(p_{n}) = 1 $ whereas   $\dist(p_{n}, J_{c_{n}})>3\cdot 2^{-l_{n}}$.  
\end{itemize}
In other words, given an oracle for $c_n$, the machine $M_n^\phi$ cannot decide pixel $p_{n}$ of $J_{c_{n}}$ in time $T(l_n)$;
\item\label{pixels}  $\dist(J_{c_{n-1}}, J_{c_{n}})  < 2^{-3l_{n-1}}$ and $\dist(K_{c_{n}}, K_{c_{n-1}})  < 2^{-3l_{n-1}}$; 
\item\label{parameters} $|c_n-c_{n-1}|<2^{-3l_n}$.
\end{enumerate}
\bigskip

\noindent
    {\bf Base of induction.}    We start by considering the parameter $r=-1.75$, which is the primitive root of the period 3 copy of $\M$.  For $i\in \{1,2\}$ let $c^{i}_{k}(r)$ be the two sequences given by Theorem \ref{t:discontinuity}.
     By Corollary~\ref{cor:disc}, there  exists $l_{1}\in \NN$ and $k_{0}$ such that  for all $k\geq k_{0}$, $$\dist_H(J_{c^{1}_{k}(r)}, J_{c^{2}_{k}(r)}) >  10 \cdot 2^{-l_{1}}.$$ Moreover, for such a $k$ there is a dyadic point $p_{1}$ of size $l_{1}$ such that
$$
\dist(p_{1}, J_{c^{1}_{k}(r)}) < 2^{-l_{1}}/10 \quad  \text{ and } \quad     \dist(p_{1},J_{c^{2}_{k}(r)}) > 8 \cdot 2^{-l_{1}}. 
$$
 
We let the machine $M_1^\phi$ compute $J_{c}$ at $p_{1}$ with precision $2^{-l_1}$, giving it $r=-1.75$ as the parameter.  If the machine outputs $s\in\{0,1\}$, we set $c_{1}\equiv c^{s+1}_{k}$ where $k\in \NN$ is chosen large enough so that  the condition 
 $$|c^{s+1}_k+1.75|<2^{-T(l_1)}$$ 
 holds as well.   Note that  in the running time $T(l_{1})$ the machine $M^\phi_{1}$ cannot tell the difference between parameter $r$ and parameter $c_{1}=c^{s+1}_{k}$, and therefore it will halt and output the same answer for both parameters. This guarantees condition (\ref{machines}) to hold. 
 
 \bigskip
 
 \noindent
    {\bf Step of induction.} Assume $c_{n}$ has been constructed.   By  Theorem \ref{t:discontinuity}  and Corollary~\ref{cor:disc} again,  there exists two sequences $c^{i}_{k}$ of primitive root parameters which converge to $c_{n}$ from the right for which  the corresponding sequences of Julia sets $J_{c^{i}_{k}}$ have different Hausdorff limits, and in particular, there is a dyadic point $p_{n+1}$ of size $l_{n+1}$ and an integer $k\in\NN$ such that
$$
\dist(p_{n+1}, J_{c^{1}_{k}}) < 2^{-l_{n+1}}/10 \quad  \text{ and } \quad     \dist(p_{n+1},J_{c^{2}_{k}}) > 8 \cdot 2^{-l_{n+1}}. 
$$

 We let the machine $M_{n+1}^\phi$ compute $J_{c}$ at $p_{n+1}$ with precision $2^{-l_{n+1}}$, giving it $c_{n}$ as the parameter.  If the machine outputs $s$, we can guarantee condition (\ref{machines}) by setting $c_{n+1}\equiv c^{s+1}_{k}$ with $k$ large enough so that $$|c^{s+1}_k - c_{n}|<2^{-T(l_{n+1})}.$$ Once again, in the running time $T(l_{n+1})$ the machine $M^\phi_{n+1}$ cannot tell the difference between $c_{n}$ and $c_{n+1}$, and therefore it will halt and output the same answer for both parameters.  We can clearly chose $k$ large enough so that condition (\ref{parameters}) is verified as well.

%
%
 %

 Finally, choosing $k$ so as to ensure that condition (\ref{pixels}) holds is possible by Lemma~\ref{semi-continuity}. Note, that it guarantees that (up to a very small error)
 pixels in the picture of the Julia set that we have already created at step $n-1$ will remain in the picture of the Julia set created at step $n$, and the same is true for pixels in the basin of infinity.  
 
%
%


We now let $c_{\infty}=\lim_nc_n$ and claim that $J_{c_\infty}$ has the required properties. Indeed, condition (\ref{fooling}) ensures that for every $n$, there is a pixel $p_n$ of size $l_n$ that machine $M_n$ fails to decide correctly for $J_{c_n}$,  and condition (\ref{pixels}) guarantees that the same holds for $J_{c_{\infty}}$.

%
%
%
%
%
\bigskip

\newpage

\bibliographystyle{amsalpha}
\bibliography{bib_RY}

\providecommand{\bysame}{\leavevmode\hbox to3em{\hrulefill}\thinspace}
\providecommand{\MR}{\relax\ifhmode\unskip\space\fi MR }
\providecommand{\MRhref}[2]{%
  \href{http://www.ams.org/mathscinet-getitem?mr=#1}{#2}
}
\providecommand{\href}[2]{#2}
\begin{thebibliography}{BBY06}

\bibitem[BBY06]{BBY2}
I.~Binder, M.~Braverman, and M.~Yampolsky, \emph{On computational complexity of
  {S}iegel {J}ulia sets}, Commun. Math. Phys. \textbf{264} (2006), no.~2,
  317--334.

\bibitem[Bra04]{thesis}
M.~Braverman, \emph{Computational complexity of {E}uclidean sets: Hyperbolic
  {J}ulia sets are poly-time computable}, Master's thesis, University of
  Toronto, 2004.

\bibitem[Bra06]{braverman-parabolic}
\bysame, \emph{Parabolic {J}ulia sets are polynomial time computable},
  Nonlinearity \textbf{19} (2006), no.~6, 1383--1401.

\bibitem[BY08]{BY-book}
M~Braverman and M.~Yampolsky, \emph{Computability of {J}ulia sets}, Algorithms
  and {C}omputation in {M}athematics, vol.~23, Springer, 2008.

\bibitem[Dou94]{Douady-Lavaurs}
Adrien Douady, \emph{Does a {J}ulia set depend continuously on the
  polynomial?}, Complex dynamical systems ({C}incinnati, {OH}, 1994), Proc.
  Sympos. Appl. Math., vol.~49, Amer. Math. Soc., Providence, RI, 1994,
  pp.~91--138.

\bibitem[DY16]{DudYam1}
A.~Dudko and M.~Yampolsky, \emph{Poly-time computability of the {F}eigenbaum
  {J}ulia set}, Ergodic th. and dynam. sys. \textbf{36} (2016), 2441--2462.

\bibitem[DY17]{DudYam2}
\bysame, \emph{Almost all real quadratic {J}ulia sets are poly-time}, Preprint
  (2017).

\bibitem[Eps93]{epstein}
A.~Epstein, \emph{Towers of finite type complex analytic maps}, Ph.D. thesis,
  C{U}{N}{Y}, 1993.

\bibitem[Mil89]{Milnor-89}
J.~Milnor, \emph{Self-similarity and hairiness in the {M}andelbrot set},
  Computers in Geometry and Topology (M~Tangora, ed.), Lect. Notes Pure Appl.
  Math., vol. 114, Marcel Dekker, 1989, pp.~211--257.

\bibitem[Mil06]{Mil}
\bysame, \emph{Dynamics in one complex variable. {I}ntroductory lectures}, 3rd
  ed., Princeton University Press, 2006.

\bibitem[Ret05]{Ret}
R.~Rettinger, \emph{A fast algorithm for {J}ulia sets of hyperbolic rational
  functions.}, Electr. Notes Theor. Comput. Sci. \textbf{120} (2005), 145--157.

\bibitem[Tur36]{Tur}
A.~M. Turing, \emph{On computable numbers, with an application to the
  {E}ntscheidungsproblem}, Proceedings, London Mathematical Society (1936),
  230--265.

\bibitem[Yam03]{Yam-bounds}
M.~Yampolsky, \emph{Complex bounds revisited}, Ann. {F}ac. {S}ci. {T}oulouse
  {M}ath. \textbf{12} (2003), no.~4, 533--547.

\end{thebibliography}

\end{document}